\documentclass[11pt,a4paper,twoside]{article}
\usepackage{amsthm, amsfonts,amsmath}

\topmargin=-14 true mm
\oddsidemargin=-4 true mm
\evensidemargin=-4 true mm
\setlength{\textheight}{244 true mm}  
\setlength{\textwidth}{166 true mm}

\newtheorem{theorem}{Theorem}[section]
\newtheorem{corollary}{Corollary}[section]

\newtheorem{example}{Example}[section]
\newtheorem{lemma}{Lemma}[section]

\newtheorem{remark}{Remark}[section]


\author{
        Dhriti Sundar Patra\footnote{Department of Mathematics, Birla Institute of Technology Mesra, Ranchi: 835 215, India
       \newline e-mail: {\tt dhritimath@gmail.com} \ and \ {\tt dpatra.teqip@bitmesra.ac.in}}
        \ and \
       Vladimir Rovenski\footnote{Mathematical Department, University of Haifa, Mount Carmel, 31905 Haifa,  Israel
       \newline e-mail: {\tt vrovenski@univ.haifa.ac.il}
       }
}

\title{Almost $\eta$-Ricci solitons on Kenmotsu manifolds}

\begin{document}

\date{}

\maketitle

\begin{abstract}
In this paper we characterize the Einstein metrics in such broader classes of metrics
as almost $\eta$-Ricci solitons and $\eta$-Ricci solitons on Kenmotsu manifolds, and generalize some results of other authors.
First, we prove that a Kenmotsu metric as an $\eta$-Ricci soliton is
Einstein metric if either it is $\eta$-Einstein or the potential vector field $V$ is an infinitesimal contact transformation or $V$ is collinear to the Reeb vector field.
Further, we prove that if a Kenmotsu manifold admits a gradient almost $\eta$-Ricci soliton with a Reeb vector field leaving the scalar curvature invariant, then it is an Einstein manifold.
Finally, we present new examples of $\eta$-Ricci solitons and gradient $\eta$-Ricci solitons, which illustrate our results.

\vskip1.5mm\noindent
\textbf{Keywords}:
Almost contact metric structure, Einstein manifold, Ricci tensor, $\eta$-Ricci soliton, infinitesimal contact transformation

\vskip1.5mm
\noindent
\textbf{Mathematics Subject Classifications (2010)} 53C15; 53C25; 53D15
\end{abstract}




\section{Introduction}

\noindent
\textit{Ricci soliton}, defined on a Riemannian manifold $(M,g)$ by the partial differential equation
\begin{equation}\label{1.0}
 \frac{1}{2}\,{\cal L}_{V}\,g + {\rm Ric} + \lambda\,g = 0,
\end{equation}
is a natural generalization of Einstein metric (i.e., the Ricci tensor is a constant multiple of the Riemannian metric $g$).
In \eqref{1.0}, ${\cal L}_{V}$ denotes the Lie-derivative in the direction of $V$, ${\rm Ric}$ is the Ricci tensor of $g$ and $\lambda$ is a constant.
A Ricci soliton is trivial if $V$ is either zero or Killing on $M$.
A Ricci soliton is said to be shrinking, steady, and expanding, as $\lambda$ is negative, zero, and positive, respectively
(and there are many examples of each of them, e.g., \cite{ck1}). Otherwise, it will be called indefinite. Actually, a Ricci soliton can be considered as a generalized fixed point of the normalized version of Hamilton's Ricci flow \cite{hamilton1988ricci}: $\frac{\partial g}{\partial t}=-2{\rm Ric}$, viewed as a dynamical system in the quotient space of Riemannian metrics modulo diffeomorphisms and rescalings, e.g., \cite{ck1}.
In \cite{pigola2011ricci}, Pigoli et. al. assumed the soliton constant $\lambda$ to be a $C^\infty$-regular (i.e., smooth)  function and named it as Ricci almost soliton, later on studied by Barros et. al. \cite{barros2012some}.
Some applications of (semi-)Riemannian almost Ricci solitons to general relativity were discussed in \cite{duggal2017almost}.
In~\cite{cho2009ricci}, Cho-Kimura studied real hypersurfaces in a complex space form and
 generalized the notion of Ricci soliton to $\eta$-\textit{Ricci soliton}, defined on
 $(M,g)$ by
\begin{equation}\label{1.1}
 \frac{1}{2}\,{\cal L}_{V}\,g + {\rm Ric} + \lambda\,g + \mu\,\eta \otimes \eta=0,
\end{equation}
where the tensor product notation $(\eta\otimes\eta)(X,Y)=\eta(X)\eta(Y)$ is used
and $\lambda$, $\mu$ are real constants. Later on \eqref{1.1} was studied by Calin-Crasmareanu \cite {calin2012eta}, Blaga et. al. \cite{blaga2018almost,blaga2015eta,blaga2020almost} and Naik-Venkatesha \cite{naik2019eta}.
An~$\eta$-Ricci soliton is said to be almost $\eta$-Ricci soliton if
$\lambda$ and $\mu$ are smooth functions on $M$ (see details in \cite{blaga2018almost,blaga2020almost}).
When the potential vector field $V$ is a gradient of a smooth function $f : M \rightarrow \mathbb{R}$ (called the potential function) the manifold will be called a gradient almost $\eta$-Ricci soliton, and
\eqref{1.2} reads~as
\begin{equation}\label{1.2}
 {\rm Hess}~f + {\rm Ric} + \lambda\,g + \mu\,\eta\otimes\eta=0,
\end{equation}
where ${\rm Hess}\,f$ is the Hessian of $f$.
As a generalization of a Ricci soliton, it is also said to be shrinking, steady, and expanding, as $\lambda$ is negative, zero, and positive, respectively, e.g., \cite{blaga2018almost,naik2019eta}.

Many authors studied Ricci solitons, $\eta$-Ricci solitons and their generalizations in the framework of almost contact and paracontact geometries,
e.g., contact metrics as Ricci solitons by Cho-Sharma \cite{cho2010contact}, $K$-contact and $(k,\mu)$-contact metrics as Ricci solitons by Sharma \cite{sharma2008certain}, contact metrics as Ricci almost solitons by Ghosh-Sharma \cite{ghosh2014certain,sharma2014almost}, contact metrics as $h$-almost Ricci solitons by Ghosh-Patra \cite{ghosh2018k},
almost contact $B$-metrics as Ricci-like soliton by Manev \cite{M20},
para-Sasakian and Lorentzian para-Sasakian metrics as $\eta$-Ricci solitons by Naik-Venkatesha \cite{naik2019eta} and Blaga \cite{blaga2015eta}, 
etc.
Based on the above results in a modern and active field of research, a natural \textbf{question} can be~posed:

\smallskip

\textit{Are there almost contact metric manifolds, whose metrics are $\eta$-Ricci solitons?}

\smallskip\noindent
The notion of a warped product is very popular in differential geometry as well as in general relativity, e.g.,~\cite{chen-2017}.
Some solutions of Einstein field equations are warped products and some spacetime models, e.g., Robertson-Walker spacetime, Schwarzschild spacetime, Reissner-Nordstr\"{o}m spacetime and asymptotically flat spacetime, are warped product manifolds
(see \cite{HE73} for more details).
In \cite{kenmotsu1972class}, Kenmotsu first introduced and studied a special class of
almost contact metric manifolds, known as Kenmotsu manifolds, which is characterized in terms of warped products.
The warped product $\mathbb{R}\times_{f} N$ (of the real line $\mathbb{R}$ and a K\"{a}hler manifold $N$) with the warping function
\begin{equation}\label{E-f-warp}
 f(t)=ce^t,
\end{equation}
given on an open interval $J=(-\varepsilon, \varepsilon)$,
admits Kenmotsu structure, and conversely, every point of a Kenmotsu manifold has a neighbourhood, which is locally a warped product $J\times_{f} N$, where $f$ is given by \eqref{E-f-warp}.
First, Ghosh considered an almost contact metric, in particular, a Kenmotsu metric, as a Ricci soliton and proved that a $3$-dimensional Kenmotsu metric as a Ricci soliton is of constant negative curvature $-1$ in \cite{ghosh2011kenmotsu}, and for higher dimension, a Kenmotsu metric as a Ricci soliton is Einstein if the metric is $\eta$-Einstein \cite{ghosh2013eta} or the potential vector field $V$ is contact, see~\cite{ghosh2019ricci}.
Shanmukha-Venkatesha \cite{SV20} studied Ricci semi-symmetric
Kenmotsu manifolds with $\eta$-Ricci solitons, and Sabina et. al. \cite{ECB18} studied $\eta$-Ricci solitons on a Kenmotsu manifold satisfying some curvature conditions.


Our goal in this paper is to answer the question posed above using methods of local Riemannian geometry and to characterize Einstein metrics in some of the broader classes of metrics mentioned above. Thus, we study almost $\eta$-Ricci solitons, in particular, $\eta$-Ricci solitons, on a Kenmotsu manifold.
 This paper is organized as follows. In Section 2, the basic facts about contact metric manifolds and Kenmotsu manifolds are given.
In Section 3, we consider Kenmotsu metrics as $\eta$-Ricci solitons and find some important conditions, when a Kenmotsu metric as an $\eta$-Ricci soliton is Einstein, and Theorems~\ref{thm3.1} and \ref{thm3.3} and Corollary~\ref{thm3.2} generalize some results of the above mentioned authors.
In Section 4, we consider almost $\eta$-Ricci solitons on a Kenmotsu manifold and find some $\eta$-Einstein and Einstein manifolds, using concept of almost $\eta$-Ricci solitons. We also present examples of Kenmotsu manifolds that admit $\eta$-Ricci solitons and gradient $\eta$-Ricci solitons,
which illustrate our results.

\section{Notes on contact metric manifolds}

Here, we give some definitions and basic facts (see details in \cite{blair2010riemannian,kenmotsu1972class}), which are used in the paper. An~\textit{almost contact structure} on a smooth manifold $M^{2n+1}$ of dimension $2\,n+1$ is a triple $(\varphi,\xi,\eta)$, where $\varphi$ is a $(1,1)$-tensor, $\xi$ is a vector field (called Reeb vector field) and $\eta$ is a 1-form, satisfying
\begin{equation}\label{2.1}
 \varphi^2 = -I + \eta\otimes \xi,\quad \eta(\xi) = 1,
\end{equation}
where $I$ denotes the identity endomorphism. It follows from \eqref{2.1} that $\varphi(\xi)=0$, $\eta\circ\varphi=0$ and ${\rm rank}\,\varphi=2\,n$ (see \cite{blair2010riemannian}). A smooth manifold $M$ endowed with an almost contact structure is called an almost contact manifold.
A Riemannian metric $g$ on $M$ is said to be {compatible} with an almost contact structure $(\varphi,\xi,\eta)$ if
\[
 g(\varphi X, \varphi Y) = g(X,Y) - \eta(X)\,\eta(Y)
\]
for any $X\in\mathfrak{X}(M)$, where $\mathfrak{X}(M)$ is the Lie algebra of all vector fields on $M$.
An almost contact manifold endowed with a compatible Riemannian metric is said to be an \textit{almost contact metric manifold} and is denoted by $M(\varphi,\xi,\eta,g)$.
The {fundamental $2$-form} $\Phi$ on $M(\varphi,\xi,\eta,g)$ is defined by $\Phi(X,Y)=g(X,\varphi Y)$ for any $X\in\mathfrak{X}(M)$.
An almost contact metric manifold satisfying $d\eta = 0$ and $d\Phi = 2\,\eta\wedge\Phi$ is said to be an almost Kenmotsu manifold (e.g.,~\cite{dileo2007almost,dileo2009almost}).
An almost contact metric structure is said to be normal if the tensor $N_{\varphi}=[\varphi,\varphi] + 2\,d\,\eta\otimes \xi$ vanishes on $M$,
where $[\varphi,\varphi]$ denotes the Nijenhuis tensor of $\varphi$.
A normal almost Kenmotsu manifold is said to be a Kenmotsu manifold (see \cite{kenmotsu1972class}); equivalently, an almost contact metric manifold is said to be a Kenmotsu manifold (see \cite{kenmotsu1972class}) if
\begin{equation*}
 (\nabla_{X}\,\varphi)Y=g(\varphi X,Y)\,\xi -\eta(Y)\,\varphi X
\end{equation*}
for any $X,\,Y\in\mathfrak{X}(M)$, where $\nabla$ is the Levi-Civita connection of $g$.
The following formulas hold on Kenmotsu manifolds, see~\cite{ghosh2019ricci,kenmotsu1972class}:
\begin{eqnarray}\label{2.3}
 && \nabla_{X}\,\xi = X - \eta(X)\xi,\\
\label{2.4}
 && R(X, Y)\xi = \eta(X)Y - \eta(Y)X,\\
\label{3.1}
 && (\nabla_{\xi}Q)X=-2\,QX -4n X,\\
\label{2.5}
 && Q\xi = -2n\xi,
\end{eqnarray}
for all $X,Y\in\mathfrak{X}(M)$, where $R$ and $Q$ denote, respectively, the curvature tensor and the Ricci operator of $g$ associated with the Ricci tensor and given by
 ${\rm Ric}(X,Y)=g(QX,Y)$ for
 $X,Y\in\mathfrak{X}(M)$.
Note that \eqref{2.3}--\eqref{3.1} are simple and \eqref{2.5} is proven in \cite{ghosh2019ricci}.
Recall the commutation formula, see p.~23 in \cite{yano1970integral},
\begin{equation}\label{2.6}
 ({\cal L}_{V}\,\nabla_{Z}\,g - \nabla_{Z}{\cal L}_{V}\,g - \nabla_{[V,Z]}\,g)(X,Y) = -g(({\cal L}_{V}\,\nabla)(Z,X),Y) -g(({\cal L}_{V}\,\nabla)(Z,Y),X).
\end{equation}
A contact metric manifold $M^{2n+1}$ is called $\eta$-\textit{Einstein}, if its Ricci tensor has the following form:
\begin{equation}\label{3.15}
 {\rm Ric} = \alpha\,g + \beta\,\eta\otimes\eta,
\end{equation}
where $\alpha$ and $\beta$ are smooth functions on $M$.
For an $\eta$-Einstein $K$-contact manifold of dimension $>3$, these $\alpha$ and $\beta$ are constant~\cite{yano1984structures}), but for an $\eta$-Einstein Kenmotsu manifold this is not true~\cite{kenmotsu1972class}.

\section{On $\eta$-Ricci solitons}

Here,
we study $\eta$-Ricci solitons on a Kenmotsu manifold.


\begin{lemma}\label{lem3.2}
Let $M^{2n+1}(\varphi,\xi,\eta,g)$ be a Kenmotsu manifold. If $g$ represents an $\eta$-Ricci soliton with the potential vector field $V$ then  $({\cal L}_V\,R)(X,\xi)\xi = 0$ for all $X\in\mathfrak{X}(M)$.
\end{lemma}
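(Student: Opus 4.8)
The plan is to convert the soliton equation into information about the tensor ${\cal L}_V\nabla$ by means of the commutation formula \eqref{2.6}, and then to read off ${\cal L}_V R$ from it. Write \eqref{1.1} as $h:={\cal L}_V g=-2\,{\rm Ric}-2\lambda\,g-2\mu\,\eta\otimes\eta$. Since $g$ is parallel, \eqref{2.6} collapses to $2\,g(({\cal L}_V\nabla)(X,Y),Z)=(\nabla_X h)(Y,Z)+(\nabla_Y h)(X,Z)-(\nabla_Z h)(X,Y)$, so the symmetric $(1,2)$-tensor ${\cal L}_V\nabla$ is completely determined by $\nabla h$, and I only need it with an argument equal to $\xi$.

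The heart of the argument is the evaluation of $({\cal L}_V\nabla)(X,\xi)$. Differentiating $h$ and using $(\nabla_X\eta)(Y)=g(X,Y)-\eta(X)\eta(Y)$, which follows from \eqref{2.3}, the $\lambda$-term vanishes because $\nabla g=0$, and the whole $\mu$-contribution cancels as soon as one slot is $\xi$, since $(\nabla_W\eta)(\xi)=0$. What remains are three covariant derivatives of ${\rm Ric}$ contracted with $\xi$; here I would use \eqref{2.5} to obtain $(\nabla_X Q)\xi=-QX-2nX$ and invoke \eqref{3.1} for the $\nabla_\xi$ direction. Collecting the three terms, I expect the clean outcome $({\cal L}_V\nabla)(X,\xi)=2\,QX+4n\,X$, together with $({\cal L}_V\nabla)(\xi,\xi)=0$ (the latter since $Q\xi=-2n\xi$).

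To finish, I would use the companion of \eqref{2.6}, namely $({\cal L}_V R)(X,Y)Z=(\nabla_X {\cal L}_V\nabla)(Y,Z)-(\nabla_Y {\cal L}_V\nabla)(X,Z)$, with $Y=Z=\xi$. Since $\nabla_X\xi=X-\eta(X)\xi$ and $({\cal L}_V\nabla)(\xi,\xi)=0$, one gets $(\nabla_X {\cal L}_V\nabla)(\xi,\xi)=-2\,({\cal L}_V\nabla)(X,\xi)=-4QX-8nX$; on the other hand, differentiating $({\cal L}_V\nabla)(X,\xi)=2QX+4nX$ along $\xi$ and again using \eqref{3.1} gives $(\nabla_\xi {\cal L}_V\nabla)(X,\xi)=2(\nabla_\xi Q)X=-4QX-8nX$. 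The two expressions coincide, so their difference $({\cal L}_V R)(X,\xi)\xi$ is zero.

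I expect the middle step to be the genuine obstacle: one must check that $\lambda$ and $\mu$ really drop out of $({\cal L}_V\nabla)(X,\xi)$ and that the interplay of \eqref{2.5} and \eqref{3.1} produces exactly $2QX+4nX$ with no leftover term, for this is precisely what makes the two halves of the last step cancel. Indeed, a direct Leibniz expansion of ${\cal L}_V R$, combined with \eqref{2.4} and the symmetries of $R$, shows that $({\cal L}_V R)(X,\xi)\xi=({\cal L}_V g)(\xi,X)\,\xi-({\cal L}_V g)(\xi,\xi)\,X$, which by \eqref{2.5} equals $(4n-2\lambda-2\mu)(\eta(X)\xi-X)$; hence any miscomputation of the Ricci-derivative coefficients would survive as a nonzero multiple of $\eta(X)\xi-X$ instead of vanishing, and the content of the lemma is exactly that this multiple is $0$.
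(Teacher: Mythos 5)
Your proposal is correct and follows essentially the same route as the paper: both derive $({\cal L}_V\nabla)(X,\xi)=2QX+4nX$ from the soliton equation via the cyclic-sum consequence of \eqref{2.6} together with \eqref{3.1}--\eqref{3.2}, and then feed this into the second commutation formula $({\cal L}_V R)(X,Y)Z=(\nabla_X{\cal L}_V\nabla)(Y,Z)-(\nabla_Y{\cal L}_V\nabla)(X,Z)$. The only (harmless) difference is that you evaluate that formula directly at $Y=Z=\xi$, whereas the paper first computes the general expression for $({\cal L}_V R)(X,Y)\xi$ and then sets $Y=\xi$.
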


\begin{proof} Taking the covariant derivative of (\ref{1.1}) along $Z\in\mathfrak{X}(M)$ and using \eqref{2.3}, we have
\begin{equation}\label{3.3A}
 (\nabla_Z\,{\cal L}_{V}\,g)(X,Y) = -2(\nabla_{Z}{\rm Ric})(X,Y)-2\mu\{g(X, Z)\eta(Y) + g(Y,Z)\eta(X) -2\eta(X)\eta(Y)\eta(Z)\}
\end{equation}
for all $X,Y\in\mathfrak{X}(M)$. Since Riemannian metric is parallel, it follows from \eqref{2.6} that
\begin{equation*}
 (\nabla_{Z}\,{\cal L}_{V}\,g)(X,Y) = g(({\cal L}_{V}\,\nabla)(Z,X),Y) + g(({\cal L}_{V}\,\nabla)(Z,Y),X).
\end{equation*}
Plugging it into (\ref{3.3A}), we obtain
\begin{eqnarray}\label{3.5}
\nonumber
 && g(({\cal L}_{V}\,\nabla)(Z,X),Y) + g(({\cal L}_{V}\,\nabla)(Z,Y),X) = -2(\nabla_{Z}{\rm Ric})(X,Y)\\
 &&-2\mu\{g(X, Z)\eta(Y) + g(Y,Z)\eta(X)-2\eta(X)\eta(Y)\eta(Z)\}
\end{eqnarray}
for all $X,Y,Z\in\mathfrak{X}(M)$. Cyclically rearranging $X,Y$ and $Z$ in \eqref{3.5}, we obtain
\begin{eqnarray}\label{3.6}
\nonumber
 g(({\cal L}_{V}\,\nabla)(X,Y),Z) &=& (\nabla_{Z}{\rm Ric})(X,Y)-(\nabla_{X}{\rm Ric})(Y,Z)-(\nabla_{Y}{\rm Ric})(Z,X)\\
 &-& 2\mu\{g(X, Y)\eta(Z)-\eta(X)\eta(Y)\eta(Z) \}.
\end{eqnarray}
Taking the covariant derivative of \eqref{2.5} along $X\in\mathfrak{X}(M)$ and using \eqref{2.3}, we obtain
\begin{equation}\label{3.2}
 (\nabla_{X}Q)\xi=-QX -2n X.
\end{equation}
Substituting $\xi$ for $Y\in\mathfrak{X}(M)$ in \eqref{3.6} and applying \eqref{3.1} and \eqref{3.2}, we obtain
\begin{equation}\label{3.7}
 ({\cal L}_{V}\,\nabla)(X,\xi) = 2Q X + 4n X
\end{equation}
for any $X\in\mathfrak{X}(M)$. Next, using (\ref{2.3}), (\ref{3.7}) in the covariant derivative of (\ref{3.7}) along $Y$, yields
\begin{eqnarray*}
 (\nabla_Y {\cal L}_V\,\nabla)(X, \xi) + ({\cal L}_V\,\nabla)(X, Y) =2(\nabla_YQ)X+ 2\eta(Y)(QX + 2n X)
\end{eqnarray*}
for any $X\in\mathfrak{X}(M)$. Plugging this in the following commutation formula (see \cite{yano1970integral}, p.~23):
\begin{equation*}
 ({\cal L}_{V}\,R)(X,Y)Z = (\nabla_{X}{\cal L}_{V}\,\nabla)(Y,Z) -(\nabla_{Y}{\cal L}_{V}\,\nabla)(X,Z),
\end{equation*}
we deduce
\begin{eqnarray}\label{3.9}
 ({\cal L}_V\,R)(X,Y)\xi = 2\{(\nabla_XQ)Y - (\nabla_YQ)X\}
 + 2\{\eta(X)QY - \eta(Y)QX\} + 4n\{\eta(X)Y - \eta(Y)X\}
\end{eqnarray}
for all $X,Y\in\mathfrak{X}(M)$. Substituting $Y$ by $\xi$ in \eqref{3.9} and using \eqref{3.1}, \eqref{2.5} and \eqref{3.2}, yields our result.
\end{proof}

Now, we consider an $\eta$-Einstein Kenmotsu metric as an $\eta$-Ricci soliton and characterize the Einstein metrics in such a wider class of metrics.

\begin{theorem}\label{thm3.1}
Let $M^{2n+1}(\varphi, \xi, \eta, g)$, $n>1$, be an $\eta$-Einstein Kenmotsu manifold. If $g$ represents an $\eta$-Ricci soliton with the potential vector field $V$, then it is
Einstein with constant scalar curvature $r=-2n(2n+1)$.
\end{theorem}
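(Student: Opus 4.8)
The plan is to write the $\eta$-Einstein condition \eqref{3.15} as $QX=\alpha X+\beta\,\eta(X)\xi$ for functions $\alpha,\beta$, and to show that the soliton hypothesis forces $\beta\equiv0$. Indeed, $Q\xi=-2n\xi$ from \eqref{2.5} gives $\alpha+\beta=-2n$, so once $\beta\equiv0$ we get $\alpha\equiv-2n$ (constant), the metric is Einstein, and $r=(2n+1)\alpha=-2n(2n+1)$. Thus the whole theorem reduces to killing the function $\beta$. Before touching the soliton I would record the first-order behaviour of $\alpha,\beta$: feeding the $\eta$-Einstein form of $Q$ into \eqref{3.1} and using \eqref{3.2} yields $\xi\alpha=-2\alpha-4n$ and $\xi\beta=-2\beta$, and combining the contracted second Bianchi identity ${\rm div}\,Q=\tfrac12\nabla r$ (with $r=(2n+1)\alpha+\beta$ and $\nabla\alpha=-\nabla\beta$) with these $\xi$-derivatives produces the sharper relations $\nabla\beta=-2\beta\,\xi$ and $\nabla\alpha=2\beta\,\xi$. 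This is the only place where the hypothesis $n>1$ is used, since the reduction requires dividing by $n-1$.

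Next I would bring in the soliton. Substituting $\eta$-Einstein into \eqref{1.1} gives ${\cal L}_{V}\,g=-2(\alpha+\lambda)g-2(\beta+\mu)\,\eta\otimes\eta$, so ${\cal L}_{V}\,g(\xi,\cdot)$ is a multiple of $\eta$. Setting $W:={\cal L}_{V}\,\xi$ and using the elementary identity $({\cal L}_{V}\,\eta)(X)=({\cal L}_{V}\,g)(\xi,X)+g(W,X)$ together with ${\cal L}_{V}(\eta(\xi))=0$, I expect Lemma~\ref{lem3.2} to pin down three facts: $({\cal L}_{V}\,g)(\xi,\cdot)=0$ (equivalently $\lambda+\mu=2n$), $\eta(W)=0$, and $({\cal L}_{V}\,\eta)=g(W,\cdot)$. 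I would then Lie-differentiate the curvature identity \eqref{2.4}; substituting the $\nabla Q$ relations of the first step into \eqref{3.9} shows that $({\cal L}_{V}\,R)(X,Y)\xi=0$ for all $X,Y$ (the coefficient collapses to $2(\alpha+\beta+2n)=0$), so the Lie derivative of \eqref{2.4} reduces to $R(X,Y)W=g(W,X)Y-g(W,Y)X$. Taking a trace of this gives ${\rm Ric}(Y,W)=-2n\,g(Y,W)$, i.e. $QW=-2n\,W$; the $\eta$-Einstein eigenvalue structure with $\eta(W)=0$ turns this into $(\alpha+2n)W=0$, that is, $\beta\,W=0$.

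For the finish I would argue on the open set where $\beta\neq0$: there $W={\cal L}_{V}\,\xi=0$, hence $({\cal L}_{V}\,\eta)=g(W,\cdot)=0$ as well, so Lie-differentiating the tensor $\nabla\xi=I-\eta\otimes\xi$ coming from \eqref{2.3} gives $({\cal L}_{V}\nabla)(X,\xi)=0$. This contradicts \eqref{3.7}, because \eqref{3.7} and \eqref{3.2} give $({\cal L}_{V}\nabla)(X,\xi)=2QX+4nX=-2\beta\,(X-\eta(X)\xi)$, which is nonzero for $\beta\neq0$ (take $X\perp\xi$). Therefore the set $\{\beta\neq0\}$ is empty, $\beta\equiv0$, and the Einstein conclusion with $r=-2n(2n+1)$ follows.

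I expect the main obstacle to be precisely that the first-order consequences of the soliton equation are all automatically consistent with an arbitrary $\beta$: Lemma~\ref{lem3.2}, as well as \eqref{3.7} and \eqref{3.9}, are identically satisfied by \emph{any} $\eta$-Einstein Kenmotsu soliton, so no amount of first-order bookkeeping can pin $\beta$ down, and non-Einstein $\eta$-Einstein Kenmotsu manifolds genuinely exist. The real constraint surfaces only at second order, through the curvature identity for $W={\cal L}_{V}\,\xi$: the crux is recognizing that $W$ obeys the same curvature relation as $\xi$, whence $QW=-2n\,W$, and that this eigenvalue condition is incompatible with $\beta\neq0$ unless $W$ degenerates to zero, at which point \eqref{3.7} delivers the contradiction. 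Care must also be taken that all the manipulations with ${\cal L}_{V}\,\eta$ and ${\cal L}_{V}\,\xi$ use only the symmetric part of $\nabla V$ (which the soliton controls) and never its unknown skew part.
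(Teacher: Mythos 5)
Your proof is correct, but it is a genuinely different --- and in fact more robust --- argument than the one in the paper. The paper's proof stays entirely at the level you call ``first-order bookkeeping'': it computes $(\nabla_YQ)X$ from the $\eta$-Einstein ansatz, inserts this into \eqref{3.9} to get \eqref{3.17}, sets $Y=\xi$ and invokes Lemma~\ref{lem3.2} to conclude $\xi(r)=0$, whence $r=-2n(2n+1)$ from the trace of \eqref{3.2}. Your warning about this strategy is well taken: if one recomputes $(\nabla_YQ)X$ carefully, the first term is $\frac{Y(r)}{2n}\bigl(X-\eta(X)\xi\bigr)=-\frac{Y(r)}{2n}\varphi^2X$ (opposite in sign to the displayed formula), and \eqref{3.17} is then missing a term $-4\beta\bigl(\eta(X)Y-\eta(Y)X\bigr)$ with $\beta=-(2n+1+\frac{r}{2n})$; once these are restored, putting $Y=\xi$ in the corrected identity and applying Lemma~\ref{lem3.2} yields only $\xi(r)=4n\beta=-2\bigl(r+2n(2n+1)\bigr)$, which is exactly the trace of \eqref{3.2} and hence holds on \emph{every} Kenmotsu manifold --- no new information. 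Your route supplies the missing second-order input: the contracted Bianchi identity plus \eqref{3.1} gives $\nabla\alpha=2\beta\,\xi$ (this is where $n>1$ enters, via division by $n-1$), which makes $({\cal L}_VR)(X,Y)\xi$ vanish for \emph{all} $X,Y$; Lie-differentiating \eqref{2.4} then transfers the curvature identity from $\xi$ to $W={\cal L}_V\xi$, forcing $QW=-2nW$ and hence $\beta W=0$; and on $\{\beta\neq0\}$ the vanishing of $W$ and of ${\cal L}_V\eta$ contradicts \eqref{3.7}, since $({\cal L}_V\nabla)(X,\xi)=2QX+4nX=-2\beta(X-\eta(X)\xi)\neq0$ there. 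I have checked each of these steps (the three facts $\lambda+\mu=2n$, $\eta(W)=0$, ${\cal L}_V\eta=g(W,\cdot)$ do follow from Lemma~\ref{lem3.2} exactly as in the paper's Lemma~\ref{lem3.3}; the coefficient in \eqref{3.9} does collapse to $2(\alpha+\beta+2n)=0$ once $X(\alpha)=2\beta\eta(X)$ is substituted; and the trace of $R(X,Y)W=g(W,X)Y-g(W,Y)X$ does give $QW=-2nW$), and they all hold. What your approach buys is a proof that actually pins down $\beta$; what the paper's approach buys, when it works, is brevity, but as written it does not close the loop on the non-Einstein $\eta$-Einstein examples that genuinely exist on Kenmotsu manifolds.
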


\begin{proof} First, tracing \eqref{3.15} gives $r=(2n+1)\alpha+\beta$ and  putting $X=Y=\xi$ in \eqref{3.15} and using \eqref{2.5}, yields $\alpha+\beta=-2n$ and $r=2n(\alpha-1)$. Therefore, by computation, \eqref{3.15} can be transformed into
\begin{equation*}
 {\rm Ric} = \big(1+\frac{r}{2n}\big)\,g -\big(2n+1+\frac{r}{2n}\big)\,\eta\otimes\eta.
\end{equation*}
By \eqref{2.1}, the foregoing equation entails that
\begin{equation*}
 (\nabla_Y Q)X =\frac{Y(r)}{2n}\varphi^2 X -\big(2n+1+\frac{r}{2n}\big)\big( g(X,Y)\xi + \eta(X)(Y-2\eta(Y)\xi)\big)
\end{equation*}
for all $X,Y\in\mathfrak{X}(M)$. By virtue of this, \eqref{3.9} provides
\begin{equation}\label{3.17}
 ({\cal L}_V\,R)(X, Y)\xi = \big(X(r)\varphi^2 Y-Y(r)\varphi^2 X\big)/n
\end{equation}
for all $X,Y\in\mathfrak{X}(M)$. Inserting $Y=\xi$ in \eqref{3.17} and applying Lemma~\ref{lem3.2}, we get $\xi(r)\varphi^2 X=0$ for any $X\in\mathfrak{X}(M)$. This implies $\xi(r)=0$. Using this in the trace of \eqref{3.2}, gives $r=-2n(2n+1)$.
\end{proof}

The curvature tensor of a $3$-dimensional Riemannian manifold has well-known form
\begin{equation}\label{3.18}
  R(X,Y)Z = g(Y,Z)QX -g(X,Z)QY +g(QY,Z)X -g(QX,Z)Y -\frac{r}{2}\big(g(Y,Z)X -g(X,Z)Y\big)
\end{equation}
for all $X,Y,Z\in\mathfrak{X}(M)$. Inserting $Y=Z=\xi$ in \eqref{3.18} and using \eqref{2.5}, we obtain
\begin{equation}\label{3.19}
 QX=(1+\frac{r}{2})X -\big( 3+\frac{r}{2}\big)\eta(X)\xi
\end{equation}
for any $X\in\mathfrak{X}(M)$. Thus, proceeding in the same way as in Theorem~\ref{thm3.1} and applying Lemma~\ref{lem3.2} we conclude that $r=-6$.
Hence, from \eqref{3.19} we have $QX=2nX$. Plugging this in \eqref{3.18} implies that $(M,g)$ is of constant negative curvature $-1$. Thus, from Theorem~\ref{thm3.1} we obtain the following.

\begin{corollary}\label{thm3.2}
If a $3$-dimensional Kenmutsu metric $g$ represents an $\eta$-Ricci soliton with the potential vector field $V$, then it is of constant negative curvature $-1$.
\end{corollary}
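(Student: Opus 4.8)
The plan is to exploit the fact that in dimension three every Kenmotsu manifold is automatically $\eta$-Einstein, so that the argument of Theorem~\ref{thm3.1} applies almost verbatim despite the hypothesis $n>1$ there being violated, and then to upgrade the resulting Einstein condition to constant sectional curvature by means of the three-dimensional curvature identity.

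First I would recall the well-known expression \eqref{3.18} for the curvature tensor of an arbitrary $3$-dimensional Riemannian manifold in terms of the Ricci operator $Q$ and the scalar curvature $r$. Setting $Y=Z=\xi$ in \eqref{3.18} and invoking $Q\xi=-2\xi$ (equation \eqref{2.5} with $n=1$) yields the pointwise formula \eqref{3.19}, namely $QX=(1+\frac r2)X-(3+\frac r2)\eta(X)\xi$. This is precisely the $\eta$-Einstein form \eqref{3.15} with $\alpha=1+\frac r2$ and $\beta=-(3+\frac r2)$, so a $3$-dimensional Kenmotsu manifold is $\eta$-Einstein with no additional assumption, and the machinery of Theorem~\ref{thm3.1} is available.

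Next I would differentiate \eqref{3.19} along $Y$, using $\nabla_X\xi=X-\eta(X)\xi$ from \eqref{2.3} and $\varphi^2=-I+\eta\otimes\xi$ from \eqref{2.1}, to express $(\nabla_YQ)X$. Substituting this into the Lie-derivative-of-curvature identity \eqref{3.9} furnished by Lemma~\ref{lem3.2}, the genuinely Ricci-dependent terms cancel and only the gradient of $r$ survives, giving an identity of the shape \eqref{3.17}, that is $({\cal L}_VR)(X,Y)\xi=\big(X(r)\varphi^2Y-Y(r)\varphi^2X\big)/n$ with $n=1$. Putting $Y=\xi$, applying Lemma~\ref{lem3.2}, and noting $\varphi^2\xi=0$, forces $\xi(r)\varphi^2X=0$ for all $X$, whence $\xi(r)=0$. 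Feeding $\xi(r)=0$ into the trace of \eqref{3.2} then pins down $r=-6$. With $r=-6$ the formula \eqref{3.19} collapses to $QX=-2X$, so $(M,g)$ is Einstein; substituting $QX=-2X$ and $r=-6$ back into \eqref{3.18} gives $R(X,Y)Z=-\big(g(Y,Z)X-g(X,Z)Y\big)$, which is exactly the curvature tensor of a space of constant sectional curvature $-1$.

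The only delicate point is the cancellation in the middle step: one must carefully differentiate the rank-one term $\eta(X)\xi$ in \eqref{3.19} and verify that the $Q$-dependent contributions appearing in \eqref{3.9} annihilate one another, leaving solely the scalar-curvature-gradient terms. Everything else is the three-dimensional shadow of Theorem~\ref{thm3.1}, with the hypothesis $n>1$ replaced by the automatic $\eta$-Einstein property in dimension three, supplemented by the elementary algebra confirming that the reduced curvature identity is precisely the space-form condition with curvature $-1$.
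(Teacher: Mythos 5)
Your proposal is correct and follows essentially the same route as the paper: the authors likewise derive \eqref{3.19} from the $3$-dimensional curvature identity \eqref{3.18}, observe that the manifold is therefore automatically $\eta$-Einstein, repeat the argument of Theorem~\ref{thm3.1} with Lemma~\ref{lem3.2} to get $\xi(r)=0$ and hence $r=-6$, and then substitute $QX=-2X$ back into \eqref{3.18} to obtain constant curvature $-1$. Your explicit remark that the $n>1$ hypothesis of Theorem~\ref{thm3.1} is harmlessly bypassed because the $\eta$-Einstein property is automatic in dimension three is exactly the (implicit) logic of the paper.
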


\begin{remark}\rm
Ghosh proved Theorem~\ref{thm3.1} in \cite{ghosh2013eta} and Corollary~\ref{thm3.2} in \cite{ghosh2011kenmotsu} for Ricci solitons.
In~this article, using different technique, we prove two above results in a short and direct way for an $\eta$-Ricci soliton on a Kenmotsu manifold. Since Ricci soliton is a particular case of an $\eta$-Ricci soliton, we can also derive similar results for Ricci solitons using this method.
\end{remark}

A vector field $X$ on a contact metric manifold $M$ is called a \textit{contact} or \textit{infinitesimal contact transformation} if it preserves the contact form $\eta$, i.e., there is a smooth function $\rho:M\to\mathbb{R}$ satisfying
\begin{equation}\label{3.20}
 {\cal L}_{X}\,\eta=\rho\,\eta,
\end{equation}
and if $\rho=0$ then the vector field $X$ is called strict.
We
consider a Kenmotsu metric as an $\eta$-Ricci soliton, whose potential vector field $V$ is contact or $V$ collinear to $\xi$.
First, we derive the following.

\begin{lemma}\label{lem3.3}
Let $M^{2n+1}(\varphi,\xi,\eta,g)$ be a Kenmotsu manifold. If $g$ represents an $\eta$-Ricci soliton with a potential vector field $V$ then we have $\lambda+\mu=2n$.
\end{lemma}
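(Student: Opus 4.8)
The plan is to pin down the two constants by evaluating the soliton equation \eqref{1.1} at the Reeb field, and then to remove the only surviving unknown by a curvature argument built on Lemma~\ref{lem3.2}.

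First I would put $X=Y=\xi$ in \eqref{1.1}. Since ${\rm Ric}(\xi,\xi)=g(Q\xi,\xi)=-2n$ by \eqref{2.5} and $g(\xi,\xi)=\eta(\xi)=1$, this reduces to
\begin{equation*}
 \tfrac12({\cal L}_V g)(\xi,\xi)-2n+\lambda+\mu=0 .
\end{equation*}
Because $\nabla_\xi\xi=0$ by \eqref{2.3}, one has $({\cal L}_V g)(\xi,\xi)=2\,g(\nabla_\xi V,\xi)=2\,\xi(\eta(V))$, so the whole statement is equivalent to the single scalar identity $\xi(\eta(V))=0$; granting this we get $\lambda+\mu=2n$. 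The main obstacle is that the naive evaluation only delivers $\lambda+\mu=2n-\xi(\eta(V))$: the soliton equation contracted with $\xi$ in any way turns out to be self-consistent for every value of $\xi(\eta(V))$, so genuinely new (curvature) information is required to force it to vanish.

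The key idea is to compute $({\cal L}_V R)(X,\xi)\xi$ in two ways. On one hand Lemma~\ref{lem3.2} asserts it is zero. On the other hand I would expand it straight from the definition of ${\cal L}_V R$, using only the Kenmotsu identity \eqref{2.4} and the fact that $d\eta=0$ on a Kenmotsu manifold, so that Cartan's formula yields $({\cal L}_V\eta)(X)=X(\eta(V))$. Evaluating $R(X,\xi)\xi$, $R([V,X],\xi)\xi$ and $R(X,[V,\xi])\xi$ through \eqref{2.4}, all the bracket-terms cancel in pairs except one, leaving the soliton-free identity
\begin{equation*}
 ({\cal L}_V R)(X,\xi)\xi=X(\eta(V))\,\xi-\xi(\eta(V))\,X-R(X,\xi)[V,\xi].
\end{equation*}
Combined with Lemma~\ref{lem3.2}, its right-hand side must vanish for all $X$.

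Finally I would take the metric trace of this relation over $X$. Using $\sum_i e_i(\eta(V))\,\eta(e_i)=\xi(\eta(V))$, the contraction $\sum_i g(R(e_i,\xi)[V,\xi],e_i)={\rm Ric}(\xi,[V,\xi])$, the value $Q\xi=-2n\xi$ from \eqref{2.5}, and the elementary computation $\eta([V,\xi])=-g(\nabla_\xi V,\xi)=-\xi(\eta(V))$ (so that ${\rm Ric}(\xi,[V,\xi])=2n\,\xi(\eta(V))$), the trace collapses to $-4n\,\xi(\eta(V))=0$. As $n\ge1$ this gives $\xi(\eta(V))=0$, and substituting back into the evaluation at $\xi$ yields $\lambda+\mu=2n$. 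The only delicate point is the bookkeeping of the curvature-contraction sign convention, which I would fix once and for all by checking that it reproduces ${\rm Ric}(\xi,\xi)=-2n$.
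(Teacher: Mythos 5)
Your proposal is correct and follows essentially the same route as the paper: both arguments hinge on Lie-differentiating the Kenmotsu identity $R(X,\xi)\xi=-X+\eta(X)\xi$ along $V$, invoking Lemma~\ref{lem3.2} to kill $({\cal L}_V R)(X,\xi)\xi$, and combining the outcome with the soliton equation evaluated at $\xi$. The only difference is bookkeeping: your scalar $\xi(\eta(V))$ is exactly the paper's $-\eta({\cal L}_V\,\xi)$ from \eqref{3.14}, and where the paper closes with the tensorial identity $(2n-\lambda-\mu)\varphi^2X=0$ you instead take a trace to force $\xi(\eta(V))=0$ --- both steps are sound.
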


\begin{proof} Equation (\ref{2.4}) gives us $R(X, \xi)\xi=-X+\eta(X)\xi$. Taking its Lie derivative along $V$, we obtain
\begin{equation}\label{3.11}
 ({\cal L}_V\,R)(X, \xi)\xi + R(X, {\cal L}_V\,\xi)\xi + R(X, \xi){\cal L}_V\,\xi = \{({\cal L}_V\,\eta)X\}\xi + \eta(X){\cal L}_V\,\xi.
\end{equation}
Putting $({\cal L}_V\,R)(X, \xi)\xi=0$ (by Lemma~\ref{lem3.2}) and applying \eqref{2.4}, equation \eqref{3.11} becomes
\begin{equation}\label{3.12}
 ({\cal L}_V\,g)(X,\xi) + 2\,\eta({\cal L}_V\,\xi)X=0
\end{equation}
for any $X\in\mathfrak{X}(M)$.
Using \eqref{1.1} in the Lie derivative of $g(\xi, \xi) =1$, we obtain
\begin{equation}\label{3.14}
 \eta({\cal L}_V\,\xi) = \lambda+\mu-2\,n.
\end{equation}
Plugging the values of $({\cal L}_V\,g)(X,\xi)$ and $\eta({\cal L}_V\,\xi)$ from \eqref{1.1} and \eqref{3.14} respectively, into \eqref{3.12}, we obtain $(2n-\lambda-\mu)\varphi^2X=0$.
Using \eqref{2.1} and then tracing, we achieve the required result.
\end{proof}

\begin{theorem}\label{thm3.3}
Let $M^{2n+1}(\varphi, \xi, \eta, g)$, $n>1$, be a Kenmotsu manifold. If $g$ represents an $\eta$-Ricci soliton and the potential vector field $V$ is an infinitesimal contact transformation, then $V$ is strict and $(M,g)$ is an Einstein manifold with constant scalar curvature $r=-2n(2n+1)$.
\end{theorem}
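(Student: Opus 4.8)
The plan is to read off strictness of $V$ directly from the two preceding lemmas, then to strengthen the curvature identity of Lemma~\ref{lem3.2} into a Codazzi-type equation for the Ricci operator, and finally to reduce the Einstein conclusion to Theorem~\ref{thm3.1} by showing the metric is $\eta$-Einstein. For the first step, note that $\eta(X)=g(\xi,X)$, so a routine computation gives $({\cal L}_V\eta)(X)=({\cal L}_V g)(X,\xi)+g({\cal L}_V\xi,X)$. By Lemma~\ref{lem3.3} we have $\lambda+\mu=2n$, whence \eqref{3.14} yields $\eta({\cal L}_V\xi)=0$, and then \eqref{3.12} gives $({\cal L}_V g)(X,\xi)=0$ for all $X$. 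Feeding these together with the contact hypothesis \eqref{3.20}, ${\cal L}_V\eta=\rho\,\eta$, into the identity above produces $g({\cal L}_V\xi,X)=\rho\,\eta(X)$, i.e.\ ${\cal L}_V\xi=\rho\,\xi$; applying $\eta$ and using $\eta({\cal L}_V\xi)=0$ forces $\rho=0$. Thus $V$ is strict, and moreover ${\cal L}_V\xi=0$ and ${\cal L}_V\eta=0$.

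With ${\cal L}_V\xi=0$ and ${\cal L}_V\eta=0$ in hand, I would Lie-differentiate the tensor identity \eqref{2.4}, $R(X,Y)\xi=\eta(X)Y-\eta(Y)X$; since the identity endomorphism, $\xi$ and $\eta$ are all annihilated by ${\cal L}_V$, this upgrades Lemma~\ref{lem3.2} to $({\cal L}_V R)(X,Y)\xi=0$ for \emph{all} $X,Y$. Substituting this into \eqref{3.9} and writing $S:=Q+2nI$ (so that $g$ is Einstein iff $S=0$, and $\eta$-Einstein iff $S$ is a pointwise scalar multiple of $-\varphi^2$, the projection onto $\mathcal{D}=\ker\eta$), I obtain the Codazzi-type equation
\[
 (\nabla_X S)Y-(\nabla_Y S)X=\eta(Y)\,SX-\eta(X)\,SY .
\]
By \eqref{2.5}, \eqref{3.2} and \eqref{3.1} the operator $S$ satisfies $S\xi=0$, $(\nabla_X S)\xi=-SX$ and $(\nabla_\xi S)=-2S$, and contracting the displayed equation via the second Bianchi identity $\operatorname{div}Q=\tfrac12\nabla r$ recovers $\xi(r)=-2\bigl(r+2n(2n+1)\bigr)$ together with $dr\parallel\eta$.

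The remaining task is to promote this to $S$ being scalar on $\mathcal{D}$, i.e.\ to $\eta$-Einstein, after which Theorem~\ref{thm3.1} immediately yields that $g$ is Einstein with $r=-2n(2n+1)$. The tool I expect to use is the exterior covariant derivative of the displayed equation: differentiating its right-hand side explicitly through $(\nabla_Z\eta)(Y)=g(Z,Y)-\eta(Z)\eta(Y)$ and matching the cyclic sum against the curvature action $R\cdot S$ of $d^\nabla S$, then inserting $R(X,Y)\xi$ from \eqref{2.4} and $S\xi=0$. This should produce an algebraic relation on $S|_{\mathcal{D}}$ that collapses it to a multiple of the identity, and it is precisely here that the hypothesis $n>1$ should be needed.

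The main obstacle is exactly this last step. The $\xi$-directional and trace consequences of the Codazzi equation are automatically satisfied on any Kenmotsu manifold — indeed $\xi(r)=-2(r+2n(2n+1))$ already follows from tracing \eqref{3.1}, and the relation $dr\parallel\eta$ is consistent with non-constant $r$ of the form $r=-2n(2n+1)+Ce^{-2t}$ along the $\xi$-lines. Hence no first-order contraction can pin down $r$ or force $S=0$; the genuine content lies in the behaviour of $S$ on the $2n$-dimensional distribution $\mathcal{D}$, which I expect to require the second-order Bianchi/curvature argument above rather than a mere contraction, with $n>1$ entering essentially.
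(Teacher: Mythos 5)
Your first step (strictness of $V$) is correct and is essentially the paper's argument: $({\cal L}_V\eta)(X)=({\cal L}_Vg)(X,\xi)+g({\cal L}_V\xi,X)$, Lemma~\ref{lem3.3} kills the first term and forces $\eta({\cal L}_V\xi)=0$, and the contact hypothesis then gives ${\cal L}_V\xi=\rho\,\xi$ with $\rho=0$, hence ${\cal L}_V\xi=0$ and ${\cal L}_V\eta=0$.

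The second half, however, has a genuine gap, and you have correctly identified where it is but not closed it. By passing from the strictness data to $({\cal L}_VR)(X,Y)\xi=0$ and then to the Codazzi-type equation $(\nabla_XS)Y-(\nabla_YS)X=\eta(Y)SX-\eta(X)SY$ for $S=Q+2nI$, you have differentiated once too often and discarded exactly the information that finishes the proof: as you yourself observe, the $\xi$-slot and all traces of that equation are identities on every Kenmotsu manifold, and the proposed ``second-order Bianchi/curvature argument'' that is supposed to collapse $S|_{\mathcal D}$ to a scalar is only conjectured (``I expect\dots should produce\dots''), not carried out. The paper's proof never goes to the curvature level. The point is that the quantity $({\cal L}_V\nabla)(X,\xi)$ itself --- not its curl $({\cal L}_VR)(\cdot,\cdot)\xi$ --- is already known pointwise: on one hand, the identity \eqref{3.22}, obtained from \eqref{3.21} and $\nabla_X\xi=X-\eta(X)\xi$, gives $({\cal L}_V\nabla)(X,\xi)=-({\cal L}_V\eta)(X)\xi-\eta(X){\cal L}_V\xi$, which vanishes once ${\cal L}_V\eta=0$ and ${\cal L}_V\xi=0$; on the other hand, equation \eqref{3.7} from the proof of Lemma~\ref{lem3.2} says $({\cal L}_V\nabla)(X,\xi)=2QX+4nX$ for every $\eta$-Ricci soliton on a Kenmotsu manifold. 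Comparing the two gives $QX=-2nX$ immediately, hence Einstein with $r=-2n(2n+1)$, with no need to establish the $\eta$-Einstein condition first or to invoke Theorem~\ref{thm3.1}. I recommend you replace your curvature-level detour by this direct comparison; as written, your argument does not prove the theorem.
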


\begin{proof} First, recall the known formula (see p.~23 of \cite{yano1970integral}):
\begin{equation}\label{3.21}
 {\cal L}_{V}\nabla_{X}\,Y - \nabla_{X}\,{\cal L}_{V}\,Y - \nabla_{[V,X]}Y = ({\cal L}_{V}\,\nabla)(X, Y)
\end{equation}
for all $X,Y\in\mathfrak{X}(M)$. Now, setting $Y=\xi$ in \eqref{3.21} and using \eqref{2.3}, we find
\begin{equation}\label{3.22}
 ({\cal L}_V\,\nabla)(X, \xi) = {\cal L}_V(X-\eta(X)\xi)-{\cal L}_V\,X+\eta({\cal L}_V\,X)\xi
 =-({\cal L}_V\,\eta)(X)\xi - \eta(X){\cal L}_V\,\xi
\end{equation}
for any $X\in\mathfrak{X}(M)$. Taking Lie-derivative of $\eta(X)=g(X,\xi)$ along $V$ and using \eqref{1.1}, \eqref{3.20} and Lemma~\ref{lem3.3}, we obtain ${\cal L}_V\,\xi=\rho\xi$. Thus, \eqref{3.14} and Lemma~\ref{lem3.3} gives $\rho=0$, therefore, ${\cal L}_V\,\xi=0$ and $V$ is strict. Also, \eqref{3.20} gives ${\cal L}_{V}\,\eta=0$. It follows from \eqref{3.22} that $({\cal L}_V\,\nabla)(X, \xi) =0$. Thus, from \eqref{3.7} we prove the rest part of this theorem.
\end{proof}

\begin{theorem}
Let $M^{2n+1}(\varphi, \xi, \eta, g)$ be a Kenmotsu manifold. If $g$ represents a $\eta$-Ricci soliton with non-zero potential vector field $V$ is collinear to $\xi$, then $g$ is Einstein with constant scalar curvature $r=-2n(2n+1)$.
\end{theorem}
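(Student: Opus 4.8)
The plan is to write the collinearity hypothesis as $V=b\,\xi$ for a nowhere-zero smooth function $b$ on $M$, and to extract from the soliton equation \eqref{1.1} an explicit formula for the Ricci tensor. First I would compute the Lie derivative of the metric along $V$ via $(\mathcal{L}_V g)(X,Y)=g(\nabla_X V,Y)+g(\nabla_Y V,X)$ together with \eqref{2.3}, using $\nabla_X(b\,\xi)=(Xb)\,\xi+b\,(X-\eta(X)\,\xi)$. This yields
\begin{equation*}
(\mathcal{L}_V g)(X,Y)=(Xb)\,\eta(Y)+(Yb)\,\eta(X)+2b\,g(X,Y)-2b\,\eta(X)\eta(Y).
\end{equation*}
Substituting into \eqref{1.1} then gives the working formula
\begin{equation*}
{\rm Ric}(X,Y)=-\tfrac12\big((Xb)\,\eta(Y)+(Yb)\,\eta(X)\big)-(b+\lambda)\,g(X,Y)+(b-\mu)\,\eta(X)\eta(Y).
\end{equation*}

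The decisive step, and the main obstacle, is to show that $b$ is in fact constant; everything afterwards is formal. Setting $X=Y=\xi$ in the Ricci formula (with $\eta(\xi)=1$) and comparing with ${\rm Ric}(\xi,\xi)=-2n$, which follows from \eqref{2.5}, and then invoking Lemma~\ref{lem3.3} so that $\lambda+\mu=2n$, the comparison collapses to $\xi b=0$. Next I would put only $Y=\xi$ and use ${\rm Ric}(X,\xi)=-2n\,\eta(X)$ (again from \eqref{2.5} by symmetry of $Q$), together with $\xi b=0$ and $\lambda+\mu=2n$: the terms proportional to $\eta(X)$ cancel and I am left with $-\tfrac12(Xb)=0$, hence $Xb=0$ for every $X$. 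Therefore $b$ is a nonzero constant, so $db\equiv0$.

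With $db\equiv0$, the Ricci formula reduces to ${\rm Ric}=-(b+\lambda)\,g+(b-\mu)\,\eta\otimes\eta$, where both coefficients are constants; this is precisely the $\eta$-Einstein condition \eqref{3.15}. I would then conclude by invoking the results already established: for $n>1$, Theorem~\ref{thm3.1} applies directly and forces the metric to be Einstein with $r=-2n(2n+1)$, while for the remaining case $n=1$ the three-dimensional Corollary~\ref{thm3.2} gives constant negative curvature $-1$, which is Einstein with $r=-6=-2n(2n+1)$. This settles all dimensions and completes the argument. I expect no difficulty beyond bookkeeping once $b$ is shown to be constant; the only genuinely substantive point is the double use of \eqref{2.5} (in the $(\xi,\xi)$ and the $(X,\xi)$ directions) in tandem with Lemma~\ref{lem3.3} to kill the gradient of $b$.
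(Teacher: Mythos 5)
Your proposal is correct and follows essentially the same route as the paper: write $V$ as a function times $\xi$, substitute into \eqref{1.1}, use the $(\xi,\xi)$ and $(X,\xi)$ evaluations together with \eqref{2.5} and Lemma~\ref{lem3.3} to kill the gradient of the coefficient function, and then reduce to the $\eta$-Einstein case handled by Theorem~\ref{thm3.1}. The only (welcome) difference is that you explicitly cover $n=1$ via Corollary~\ref{thm3.2}, which the paper's proof glosses over when it invokes Theorem~\ref{thm3.1} (stated for $n>1$).
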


\begin{proof}  By hypothesis: $V=\sigma\,\xi$ for some smooth function $\sigma$ on $M$. Using \eqref{2.3} in the covariant derivative of $V=\sigma\,\xi$ along arbitrary $X\in\mathfrak{X}(M)$ yields
$\nabla_X V = X(\sigma)\xi +\sigma (X-\eta(X)\xi)$
for any $X\in\mathfrak{X}(M)$. Taking this into account, the soliton equation \eqref{1.1} transforms into
\begin{equation}\label{3.23}
 2{\rm Ric}(X,Y)+X(\sigma)\eta(Y)+Y(\sigma)\eta(X) +2(\sigma+\lambda) g(X,Y)-2(\sigma-\mu)\eta(X)\eta(Y)=0
\end{equation}
for all $X,Y\in\mathfrak{X}(M)$. Inserting $X=Y=\xi$ in \eqref{3.23} and using \eqref{2.5} and Lemma~\ref{lem3.3}, we get $\xi(\sigma)=0$.
It~follows from \eqref{3.23} that $X(\sigma)=0$. Putting it into \eqref{3.23} gives
\begin{equation}\label{3.24}
 {\rm Ric}= -(\sigma+\lambda)\,g +(\sigma-\mu)\,\eta\otimes\eta.
\end{equation}
This shows that $(M,g)$ is an $\eta$-Einstein manifold, therefore, from Theorem~\ref{thm3.1} we conclude that $(M,g)$ is an Einstein manifold. Thus, from \eqref{3.24} we have $\sigma=\mu$, therefore, $\sigma+\lambda=2n$ (follows from Lemma~\ref{lem3.3}). This implies, using  \eqref{3.24}, that ${\rm Ric}=-2ng$, hence, $r=-2n(2n+1)$, as required.
\end{proof}

\begin{remark}\rm
In the above theorem we got $X(\sigma)=0$ for all vector fields $X$, therefore, the smooth function $\sigma$ reduces to a constant and it equals to the constant soliton $\mu$, hence, $V=\mu\,\xi$.
\end{remark}


In \cite{kenmotsu1972class}, Kenmotsu found the following necessary and sufficient condition for a Kenmotsu manifold $M^n$ to have constant
$\varphi$-holomorphic sectional curvature $H$:
\begin{eqnarray}\label{3.25M}
\nonumber
 4R(X,Y)Z=(H-3)\{g(Y,Z)X-g(X,Z)Y\}+(H+1)\{\eta(X)\eta(Z)Y-\eta(Y)\eta(Z)X\\
 +\eta(Y)g(X,Z)\xi-\eta(X)g(Y,Z)\xi+g(X,\varphi Z)\varphi Y-g(Y,\varphi Z)\varphi X+2g(X,\varphi Y)\varphi Z\}
\end{eqnarray}
for all $X,Y,Z\in\mathfrak{X}(M)$.
The following corollary of Theorem~\ref{thm3.1} illustrates Lemma~\ref{lem3.3}
and gives an example of a Kenmotsu manifold that admits an $\eta$-Ricci soliton.

\begin{corollary}
If a metric $g$ of a warped product $\mathbb{R}\times_{f} N^{2n}$ for $n>1$,
where $f(t)=ce^t$
and $N$ is a K\"{a}hler manifold
represents an $\eta$-Ricci soliton with the potential vector field $V$, then it is of constant
curvature $H=-1$.
\end{corollary}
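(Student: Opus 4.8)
The plan is to realize the given warped product as a Kenmotsu manifold of constant $\varphi$-holomorphic sectional curvature and then feed it into Theorem~\ref{thm3.1}. First I would recall from the introduction that $\mathbb{R}\times_f N$ with $f(t)=ce^t$ and $N$ K\"ahler carries a Kenmotsu structure; to make sense of the constant $H$ appearing in the conclusion we invoke Kenmotsu's characterization \eqref{3.25M}, under which $N$ is a complex space form and the curvature tensor $R$ of $g$ is given explicitly in terms of the constant $\varphi$-holomorphic sectional curvature $H$.

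The central computation is to contract \eqref{3.25M} over its first argument. Taking an orthonormal frame $\{e_i\}$ and forming $\sum_i g(R(e_i,Y)Z,e_i)$, I expect the $\varphi$-terms to collapse via $g(\varphi X,\varphi Y)=g(X,Y)-\eta(X)\eta(Y)$ together with $\mathrm{tr}\,\varphi=0$, so that the trace reduces to the $\eta$-Einstein form ${\rm Ric}=\alpha\,g+\beta\,\eta\otimes\eta$ with $\alpha=\frac{(2n+2)H-6n+2}{4}$ and $\beta=-\frac{(H+1)(2n+2)}{4}$. A convenient consistency check is $\alpha+\beta=-2n$, which matches $Q\xi=-2n\,\xi$ from \eqref{2.5}. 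This step establishes that $M$ is $\eta$-Einstein.

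With the $\eta$-Einstein property in hand and $n>1$, I would invoke Theorem~\ref{thm3.1}: since $g$ represents an $\eta$-Ricci soliton, $g$ is Einstein with $r=-2n(2n+1)$, that is ${\rm Ric}=-2n\,g$. Comparing with the decomposition above forces $\beta=0$, whence $(H+1)(2n+2)=0$ and therefore $H=-1$. Substituting $H=-1$ back into \eqref{3.25M} collapses the $(H+1)$-bracket and leaves $R(X,Y)Z=g(X,Z)Y-g(Y,Z)X$, i.e. constant sectional curvature $-1$; this both confirms the stated conclusion and exhibits the soliton concretely as the hyperbolic model $\mathbb{H}^{2n+1}(-1)$, so the corollary genuinely supplies an example of a Kenmotsu manifold admitting an $\eta$-Ricci soliton.

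The main obstacle I anticipate is purely the bookkeeping in contracting \eqref{3.25M}: the seven terms carrying the factor $(H+1)$ must each be traced carefully, and one must verify that the two $\varphi$-contractions recombine into the clean coefficient of $g$ and $\eta\otimes\eta$ above while the $\mathrm{tr}\,\varphi$ term drops out. Once the $\eta$-Einstein structure is correctly identified, the appeal to Theorem~\ref{thm3.1} is immediate, so the only content beyond that theorem is reading off that the Einstein condition is equivalent to the vanishing of $\beta$, hence to $H=-1$.
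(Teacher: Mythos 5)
Your proof follows essentially the same route as the paper's: contract Kenmotsu's constant $\varphi$-holomorphic sectional curvature formula \eqref{3.25M} to put $\mathrm{Ric}$ in $\eta$-Einstein form, feed that into Theorem~\ref{thm3.1} to obtain the Einstein condition, and read off $H=-1$ from the vanishing of the $\eta\otimes\eta$ coefficient (both you and the paper implicitly assume constant $\varphi$-sectional curvature, which is not literally in the hypothesis). As a minor point in your favor, your trace coefficients $\alpha=\frac{(2n+2)H-6n+2}{4}$ and $\beta=-\frac{(2n+2)(H+1)}{4}$ appear to be the correct contraction of \eqref{3.25M} (they satisfy $\alpha+\beta=-2n$), whereas the paper's displayed \eqref{3.26M} carries different coefficients $(2n-3)$ that look like an arithmetic slip; since in both versions $\beta$ is a nonzero multiple of $H+1$, the conclusion $H=-1$ is unaffected.
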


\begin{proof}
Idea of the following construction is that if we choose $V=n\,\xi$, i.e., $\sigma=n$, then $\mathbb{R}\times_{f} N^{2n}$ is an $\eta$-Ricci soliton for $n>1$ with $\lambda=n$ and $\mu=n$.
It follows from \eqref{3.25M} that
\begin{equation}\label{3.26M}
 4QX =((2n-3)H-3(2n+1)) X -(2n-3)(H+1)\eta(X)\xi
\end{equation}
for any $X\in\mathfrak{X}(M)$. Also, a warped product $\mathbb{R}\times_{f} N^{2n}$, where $f(t)=ce^t$ on the real line $\mathbb{R}$, see \eqref{E-f-warp}, and $N^{2n}$ is a K\"{a}hler manifold, admits the Kenmotsu structure.
Thus, it follows from \eqref{3.26M} that $\mathbb{R}\times_{f} N^{2n}$ is an $\eta$-Einstein Kenmotsu manifold, and by Theorem~\ref{thm3.1}, we conclude that it is an Einstein manifold.
Thus, it follows from \eqref{3.26M} that $\beta=0$, i.e., $H=-1$, hence, $\alpha=-2n$.
\end{proof}

\section{On almost $\eta$-Ricci solitons}

Here, we study almost $\eta$-Ricci solitons on a Kenmotsu manifold. It is known that an almost $\eta$-Ricci soliton (i.e., satisfying \eqref{1.1} for some smooth functions $\lambda$ and $\mu$) is the generalization of a Ricci almost soliton.
In \cite{sharma2008certain}, Sharma proved that if a $K$-contact metric represents a gradient Ricci soliton, then it is an Einstein manifold, and Ghosh \cite{ghosh2014certain} generalized this result for Ricci almost solitons. Recently, Ghosh \cite{ghosh2019ricci} considered Ricci almost solitons on a Kenmotsu manifold and proved that if a Kenmotsu metric is a gradient Ricci almost soliton and the Reeb vector field $\xi$ leaves the scalar curvature $r$ invariant, then it is an Einstein manifold. To generalize the above results, we consider gradient almost $\eta$-Ricci solitons on a Kenmotsu manifold and prove the following.

\begin{theorem}\label{thm4.1}
If a Kenmotsu manifold $M^{2n+1}(\varphi, \xi, \eta, g)$ admits a gradient almost $\eta$-Ricci soliton and
$\xi$ leaves the scalar curvature $r$ invariant, then $(M,g)$ is an Einstein manifold with constant scalar curvature $r=-2n(2n+1)$.
\end{theorem}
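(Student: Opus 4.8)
The plan is to exploit the gradient structure directly. Writing the soliton equation \eqref{1.2} in operator form as $\nabla_X\nabla f = {\cal S}(X)$, where ${\cal S}(X):=-QX-\lambda X-\mu\,\eta(X)\xi$, I would recover the Ricci operator $Q$ by computing the curvature acting on the gradient $\nabla f$. Since $\nabla_X Y-\nabla_Y X=[X,Y]$, one has $R(X,Y)\nabla f=(\nabla_X{\cal S})(Y)-(\nabla_Y{\cal S})(X)$; expanding this using \eqref{2.3} together with $(\nabla_X\eta)(Y)=g(X,Y)-\eta(X)\eta(Y)$ produces a combination of $(\nabla Q)$-terms and of the derivatives $X(\lambda),X(\mu),X(f)$.

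The decisive specialization is $Y=\xi$. Here the Kenmotsu identities \eqref{3.1} and \eqref{3.2} collapse the curvature-derivative terms, since $(\nabla_\xi Q)X-(\nabla_X Q)\xi=-QX-2nX$. On the other hand, the standard Kenmotsu identity $R(X,\xi)Y=-\eta(Y)X+g(X,Y)\xi$ (derived from \eqref{2.4} via the pair symmetry of $R$) gives $R(X,\xi)\nabla f=-\xi(f)\,X+X(f)\,\xi$. Equating the two expressions and solving for $QX$ yields a representation
\[
 QX=a\,X+\psi(X)\,\xi,
\]
where $a$ is a smooth function and $\psi$ a $1$-form assembled from $\lambda,\mu,f$ and their $\xi$-derivatives. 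The key structural observation is then that $Q$ is self-adjoint: comparing $g(QX,Y)$ with $g(X,QY)$ forces $\psi(X)=\psi(\xi)\,\eta(X)$, so that ${\rm Ric}=a\,g+b\,\eta\otimes\eta$ with $b:=\psi(\xi)$, i.e.\ $(M,g)$ is automatically $\eta$-Einstein. Evaluating the representation at $X=\xi$ and using \eqref{2.5} gives the crucial coefficient relation $a+b=-2n$, whence $r=(2n+1)a+b=2n(a-1)$ and $\nabla b=-\nabla a$.

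To finish I would feed this $\eta$-Einstein form into the contracted second Bianchi identity ${\rm div}\,Q=\tfrac12\nabla r$, where ${\rm div}\,Q:=\sum_i(\nabla_{e_i}Q)e_i$. Differentiating $QX=aX+b\,\eta(X)\xi$ and tracing with \eqref{2.3} gives ${\rm div}\,Q=\nabla a+[\xi(b)+2nb]\,\xi$; comparing with $\tfrac12\nabla r=n\,\nabla a$ (from $r=2n(a-1)$ and $\nabla b=-\nabla a$) leaves the identity
\[
 [\xi(b)+2nb]\,\xi=(n-1)\,\nabla a.
\]
Now the hypothesis $\xi(r)=0$ yields $\xi(a)=\xi(b)=0$; taking the inner product with $\xi$ reduces the last identity to $2nb=(n-1)\xi(a)=0$, hence $b=0$. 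Therefore $a=-2n$, so ${\rm Ric}=-2n\,g$ and $r=-2n(2n+1)$, and $(M,g)$ is Einstein, uniformly in $n$.

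I expect the main obstacle to be the first curvature computation: reducing $R(X,Y)\nabla f$ at $Y=\xi$ through \eqref{3.1}--\eqref{3.2} without sign errors, since the $\mu$-dependent terms and the scattered derivatives $X(\lambda),X(\mu),X(f)$ must be tracked carefully before the self-adjointness of $Q$ is used to absorb them into the $\eta$-Einstein form. Once that representation with $a+b=-2n$ is secured, the contracted Bianchi identity together with $\xi(r)=0$ closes the argument quickly; note in particular that no appeal to Lemma~\ref{lem3.2} is possible, since there $\lambda,\mu$ are constant, so the gradient hypothesis is genuinely what drives the proof in the almost-soliton setting.
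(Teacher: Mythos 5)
Your argument is correct, and its first half coincides with the paper's: both compute $R(X,Y)Df$ from the gradient equation \eqref{3.24k}, specialize to $Y=\xi$ using \eqref{3.1}, \eqref{3.2} and \eqref{2.4}, and extract the $\eta$-Einstein form $\operatorname{Ric}=a\,g+b\,\eta\otimes\eta$ with $a+b=-2n$ (your appeal to the self-adjointness of $Q$ is exactly what the paper accomplishes by taking the inner product of \eqref{3.27KK} with $\xi$). Where you genuinely diverge is the endgame. The paper contracts the curvature identity \eqref{3.25} over $X$ to compute $\operatorname{Ric}(Y,Df)$ (its \eqref{3.26}), evaluates at $Y=\xi$ via $\operatorname{Ric}(\xi,Df)=-2n\,\xi(f)$ to pin down $\xi(\lambda+f)$, and then uses the trace of \eqref{3.1} --- which on any Kenmotsu manifold gives $\xi(r)=-2\{r+2n(2n+1)\}$, so the hypothesis $\xi(r)=0$ yields $r=-2n(2n+1)$ in one line. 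You instead apply the contracted second Bianchi identity $\operatorname{div}Q=\tfrac12\nabla r$ directly to the $\eta$-Einstein form, obtaining $[\xi(b)+2nb]\,\xi=(n-1)\nabla a$ and hence $b=0$ from $\xi(a)=0$; I checked the divergence computation ($\operatorname{div}Q=\nabla a+[\xi(b)+2nb]\,\xi$ using \eqref{2.3} and $\nabla_\xi\xi=0$) and it is sound for all $n\ge1$. Your route has the advantage of never needing the explicit expressions of $a$ and $b$ in terms of $\lambda,\mu,f$ --- which is precisely where the paper's displayed formulas pick up sign slips (the $\mu$-term in \eqref{3.25} should read $\mu(\eta(X)Y-\eta(Y)X)$, and the $\mu$-signs in \eqref{3.27KK} are affected accordingly, though the final conclusion survives). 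The paper's route is shorter once one notices that the trace of \eqref{3.1} alone converts $\xi(r)=0$ into $r=-2n(2n+1)$, after which $a=1+\frac{r}{2n}=-2n$ and $b=0$ are immediate; your Bianchi argument buys a coefficient-free and therefore more robust closing step.
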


\begin{proof} The gradient version of the soliton equation \eqref{1.2} can be exhibited
for any $X\in\mathfrak{X}(M)$ as
\begin{equation}\label{3.24k}
 \nabla_XDf+QX+\lambda X+\mu\eta(X)\xi=0.
\end{equation}
The curvature tensor, obtained from \eqref{3.24k} and the definition $R(X,Y)=[\nabla_{X},\nabla_{Y}]-\nabla_{[X,Y]}$, satisfies
\begin{eqnarray}\label{3.25}
\nonumber
 R(X,Y)Df &=& (\nabla_{Y}Q)X - (\nabla_{X}Q)Y + Y(\lambda)X-X(\lambda)Y\\
 &+& Y(\mu)\eta(X)\xi - X(\mu)\eta(Y)\xi+\mu\big(\eta(Y)X-\eta(X)Y\big)
\end{eqnarray}
for all $X,Y\in\mathfrak{X}(M)$. Now, replacing $Y$ by $\xi$ in \eqref{3.25} and using \eqref{3.1} and \eqref{3.2}, we obtain
\begin{equation}\label{3.27A}
 R(X,\xi)Df=-QX-2nX+\xi(\lambda)X-X(\lambda)\xi-X(\mu)\xi+\xi(\mu)\eta(X)\xi +\mu\varphi^2X
\end{equation}
for any $X\in\mathfrak{X}(M)$. By virtue of \eqref{2.4}, equation \eqref{3.27A} reduces to
\begin{equation}\label{3.27KK}
 X(\lambda+\mu+f)\xi =-QX +\big(\xi(\lambda+f)+\mu-2n\big)\,X +(\xi(\mu)-\mu)\eta(X)\xi
\end{equation}
for any $X\in\mathfrak{X}(M)$. Next, taking inner product of \eqref{3.27KK} with $\xi$ and using \eqref{2.4}, we obtain
$X(\lambda+\mu+f)=\xi(\lambda+\mu+f)\eta(X)$. Putting this into \eqref{3.27KK}, we achieve
\begin{equation}\label{3.27KKK}
 QX=\big(\xi(\lambda+f)-\mu-2n\big)X-\big(\xi(\lambda+f)+\mu\big)\eta(X)\xi
\end{equation}
for any $X\in\mathfrak{X}(M)$. This shows that $(M,g)$ is an $\eta$-Einstein manifold. Further, contracting \eqref{3.25} over $X$ with respect to an orthonormal basis $\{e_i\}_{1\le i\le2n+1}$, we compute
\begin{equation}\label{ZZ}
 {\rm Ric}(Y,Df)=-\sum\nolimits_{\,i=1}^{2n+1}g((\nabla_{e_{i}}Q)Y,e_{i}) + Y(r) +2n\,Y(\lambda) +Y(\mu)-\eta(Y)\xi(\mu) +2\,n\mu\,\eta(Y).
\end{equation}
The following formula for Riemannian manifolds is well known:
${\rm trace}_g\{X\longrightarrow (\nabla_XQ)Y\}=\frac{1}{2}\,Y(r)$.
Applying this formula in \eqref{ZZ}, we achieve
\begin{equation}\label{3.26}
 {\rm Ric}(Y,Df)=\frac{1}{2}\,Y(r)+2n\,Y(\lambda)+Y(\mu)-\eta(Y)\xi(\mu)+2\,n\mu\,\eta(Y)
\end{equation}
for any $X\in\mathfrak{X}(M)$. From \eqref{2.4} one can compute ${\rm Ric}(\xi, Df)=-2\,n\,\xi(f)$. Plugging it into \eqref{3.26}, we find
$\xi(r)+4n\{\xi(f+\lambda)+\mu\}=0$.
Using this in the trace of \eqref{3.1}, we get $\xi(\lambda+f)=2n+1-\mu+\frac{r}{2n}$.
By virtue of this, equation \eqref{3.27KKK} reduces to
\begin{equation}\label{3.29M}
 QX = (1+\frac{r}{2n})X -\big((2n+1)+\frac{r}{2n}\big)\eta(X)\xi
\end{equation}
for any $X\in\mathfrak{X}(M)$. By our assumptions, $\xi(r)=0$, therefore, the trace of \eqref{3.1} gives $r = -2n(2n+1)$.
Thus, from (\ref{3.29M}) the required result follows.
\end{proof}



Next, considering a Kenmotsu metric as an almost $\eta$-Ricci soliton, whose non-zero potential vector field $V$ is pointwise
collinear to the Reeb vector field $\xi$, we extend Theorem \ref{thm4.1} from gradient almost $\eta$-Ricci solitons to almost $\eta$-Ricci solitons by proving the following.

\begin{theorem}\label{thm4.2}
If a Kenmotsu manifold $M^{2n+1}(\varphi, \xi, \eta, g)$ admits an almost $\eta$-Ricci soliton with non-zero potential vector field $V$ collinear to the Reeb vector field $\xi$ and $\xi$ leaves the scalar curvature $r$ invariant, then $(M,g)$ is an Einstein manifold with constant scalar curvature $r=-2n(2n+1)$.
\end{theorem}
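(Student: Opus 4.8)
The plan is to adapt the argument used for the collinear $\eta$-Ricci soliton in Section~3, with one essential modification: since $\lambda$ and $\mu$ are now functions rather than constants, Lemma~\ref{lem3.3} (which supplied $\lambda+\mu=2n$) is unavailable, and the conclusion that $\sigma$ is constant must be replaced by a weaker but sufficient statement about $D\sigma$. Writing $V=\sigma\,\xi$ for a non-zero smooth function $\sigma$, I would first compute $\nabla_X V=X(\sigma)\xi+\sigma\bigl(X-\eta(X)\xi\bigr)$ from \eqref{2.3}. Substituting $\tfrac12\,\mathcal{L}_V g$ into the soliton equation \eqref{1.1} then reproduces the identity \eqref{3.23} verbatim, since the computation of $\mathcal{L}_V g$ does not involve $\lambda$ or $\mu$.

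Next I would extract two scalar consequences along the Reeb direction. Setting $X=Y=\xi$ and using $\Ric(\xi,\xi)=-2n$ (from \eqref{2.5}) gives $\xi(\sigma)=2n-\lambda-\mu$; setting only $Y=\xi$ and using $\Ric(X,\xi)=-2n\,\eta(X)$ gives $X(\sigma)=\bigl(4n-\xi(\sigma)-2(\lambda+\mu)\bigr)\eta(X)$. The crucial point is that substituting the first relation into the second makes the unknown quantity $\lambda+\mu$ cancel, leaving $X(\sigma)=\xi(\sigma)\,\eta(X)$ for every $X$, that is, $D\sigma=\xi(\sigma)\,\xi$. Feeding this back into \eqref{3.23} collapses the terms $X(\sigma)\eta(Y)+Y(\sigma)\eta(X)$ and yields the $\eta$-Einstein relation $\Ric=-(\sigma+\lambda)\,g+(\sigma-\mu-\xi(\sigma))\,\eta\otimes\eta$. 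Tracing this, together with $\Ric(\xi,\xi)=-2n$, gives the linear relations $r=(2n+1)\alpha+\beta$ and $\alpha+\beta=-2n$ for its coefficients $\alpha,\beta$, whose unique solution re-expresses the Ricci tensor through $r$ alone: $\Ric=(1+\tfrac{r}{2n})\,g-(2n+1+\tfrac{r}{2n})\,\eta\otimes\eta$.

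The final step is where the hypothesis $\xi(r)=0$ enters, and I would finish exactly as in Theorems~\ref{thm3.1} and \ref{thm4.1}: contracting the identity \eqref{3.1}, i.e. $(\nabla_\xi Q)X=-2QX-4nX$, over an orthonormal frame gives $\xi(r)=-2r-4n(2n+1)$, so $\xi(r)=0$ forces $r=-2n(2n+1)$; inserting this value annihilates the $\eta\otimes\eta$ coefficient, whence $\Ric=-2n\,g$ and $(M,g)$ is Einstein. (Alternatively, once $D\sigma=\xi(\sigma)\xi$ is known, the relation $d\eta=0$ for Kenmotsu manifolds shows that the $1$-form $\sigma\,\eta$ metrically dual to $V=\sigma\xi$ is closed, so $V$ is locally a gradient and Theorem~\ref{thm4.1} applies directly; this is the precise sense in which the present result extends Theorem~\ref{thm4.1}.) I expect the main obstacle to be exactly the replacement of Lemma~\ref{lem3.3}: for an almost $\eta$-Ricci soliton one cannot conclude that $\sigma$ is constant, and one must instead notice that the two Reeb-direction traces conspire to force $D\sigma$ to be collinear with $\xi$, which is weaker but is precisely what is needed to recover the $\eta$-Einstein structure before $\xi(r)=0$ is invoked.
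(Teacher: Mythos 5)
Your proposal is correct and follows essentially the same route as the paper: substitute $V=\sigma\xi$ into the soliton equation, evaluate along $\xi$ to get $\xi(\sigma)=2n-\lambda-\mu$ and then $X(\sigma)=\xi(\sigma)\eta(X)$, deduce the $\eta$-Einstein form of $\Ric$, and finally combine $\xi(r)=0$ with the trace of \eqref{3.1} to force $r=-2n(2n+1)$ and $\Ric=-2n\,g$. Your parenthetical observation that $D\sigma=\xi(\sigma)\xi$ makes $\sigma\eta$ closed, so that $V$ is locally a gradient and Theorem~\ref{thm4.1} could be invoked, is a valid extra remark but not part of the paper's argument.
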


\begin{proof} Since
$V=\tau\,\xi$ for some smooth function $\tau$ on $M$, it follows that
\begin{eqnarray*}
 ({\cal L}_V\,g)(X,Y)=X(\tau)\eta(Y)+Y(\tau)\eta(X) +2\tau\big(g(X,Y)-\eta(X)\eta(Y)\big)
\end{eqnarray*}
for any $X,Y\in\mathfrak{X}(M)$. By virtue of this, the soliton equation \eqref{1.1} transforms into
\begin{equation}\label{3.30}
 2{\rm Ric}(X,Y)+X(\tau)\eta(Y)+Y(\tau)\eta(X) +2(\tau+\lambda) g(X,Y)=2(\tau-\mu)\eta(X)\eta(Y)
\end{equation}
for any $X,Y\in\mathfrak{X}(M)$. Now, putting $X=Y=\xi$ in \eqref{3.30} and using \eqref{2.5}, yields $\xi(\tau) = 2n -\lambda-\mu$. Thus, \eqref{3.30} yields $X(\tau)=2n-\lambda-\tau$. Using this in \eqref{3.30} implies that
\begin{equation}\label{3.31}
 {\rm Ric} = -(\tau+\lambda)\,g -(2n-\tau-\lambda)\,\eta\otimes\eta.
\end{equation}
Hence, $(M,g)$ is $\eta$-Einstein.
Moreover, if
$\xi$ leaves the scalar curvature $r$ invariant, i.e., $\xi(r) = 0$, again, tracing \eqref{3.2} gives
$\xi(r)=-2\{r+2n(2n+1)\}$, and therefore, $r=-2n(2n+1)$. Using this in the trace of \eqref{3.31} yields $\tau+\lambda=2n$.
By \eqref{3.31}, $QX=-2nX$; thus, $(M,g)$ is an Einstein manifold.
\end{proof}


If $\tau$ is constant (instead of being a function) and $V=\tau\xi$, then \eqref{3.30} and \eqref{3.31} also hold. Setting $X=Y=\xi$ in \eqref{3.30} and using \eqref{2.5}, gives $\xi(\tau) = 2n -\lambda-\mu$. This yields $\lambda+\tau=2n$ when $\tau$ is constant.
Hence, without assuming that $\xi$ leaves the scalar curvature $r$ invariant, from \eqref{3.31} we conclude that $(M,g)$ is an Einstein manifold with the Einstein constant $-2n$, therefore, we derived the following.

\begin{theorem}
If a Kenmotsu manifold $M^{2n+1}(\varphi, \xi, \eta, g)$ admits a non-trivial almost $\eta$-Ricci soliton with $V = \tau\,\xi$ for some constant $\tau$, then it is an Einstein manifold with constant scalar curvature $-2n(2n+1)$.
\end{theorem}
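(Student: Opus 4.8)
The plan is to reduce the statement to the $\eta$-Einstein situation already prepared in the proof of Theorem~\ref{thm4.2}, and then to show that the constancy of $\tau$ forces the $\eta\otimes\eta$-part of the Ricci tensor to vanish even though the hypothesis $\xi(r)=0$ is no longer available. First I would record that for $V=\tau\,\xi$ with $\tau$ constant the potential term simplifies: by \eqref{2.3} one has $\nabla_X V=\tau\,(X-\eta(X)\xi)$, with no gradient term $X(\tau)$ surviving, so that
\begin{equation*}
 ({\cal L}_V\,g)(X,Y)=2\tau\big(g(X,Y)-\eta(X)\eta(Y)\big).
\end{equation*}
Substituting this into the soliton equation \eqref{1.1} reproduces \eqref{3.30} (the derivative terms $X(\tau),Y(\tau)$ now being absent) and hence, after the $\eta$-Einstein manipulation, the identity \eqref{3.31}; thus $(M,g)$ is $\eta$-Einstein with ${\rm Ric}=-(\tau+\lambda)\,g-(2n-\tau-\lambda)\,\eta\otimes\eta$. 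Note that I cannot simply invoke Theorem~\ref{thm3.1} here, since $\lambda$ and $\mu$ need not be constant even when $\tau$ is.

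Next I would extract the scalar relation by evaluating \eqref{3.30} at $X=Y=\xi$ and using $Q\xi=-2n\xi$ from \eqref{2.5}; this yields $\xi(\tau)=2n-\lambda-\mu$. Since $\tau$ is constant, $\xi(\tau)=0$, and I read off the compatibility relation $\lambda+\mu=2n$. The whole argument then hinges on combining this relation with the $\eta$-Einstein form \eqref{3.31} so as to pin down the coefficient $(2n-\tau-\lambda)$ of $\eta\otimes\eta$.

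The main obstacle — and the step that must replace the hypothesis $\xi(r)=0$ used in Theorem~\ref{thm4.2} — is precisely showing that $\tau+\lambda=2n$, i.e.\ that the $\eta\otimes\eta$-coefficient in \eqref{3.31} vanishes. Once this is established, \eqref{3.31} collapses to ${\rm Ric}=-2n\,g$, so $(M,g)$ is Einstein and, upon tracing, $r=-2n(2n+1)$, as claimed; here non-triviality ($\tau\neq0$, so $V$ is not Killing) guarantees we are not in the degenerate trivial case. To reach $\tau+\lambda=2n$ I would first feed the constancy of $\tau$ back into \eqref{3.30} with $Y=\xi$ in the hope of a clean algebraic collapse; if that does not suffice, I would differentiate \eqref{3.31} along $\xi$ and exploit the Kenmotsu identities \eqref{3.1} and \eqref{3.2} together with the relation $\lambda+\mu=2n$. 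Controlling these $\xi$-derivative terms (equivalently, ruling out a nonconstant $\eta\otimes\eta$-coefficient obeying the structural ODE $\xi\beta=-2\beta$) is the delicate point on which the proof stands or falls.
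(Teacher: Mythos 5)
Your reduction is correct as far as it goes: for constant $\tau$ one indeed gets $({\cal L}_Vg)=2\tau(g-\eta\otimes\eta)$, hence \eqref{3.30} and \eqref{3.31}, and evaluating at $X=Y=\xi$ with \eqref{2.5} gives $\lambda+\mu=2n$. You have also located the crux exactly: everything hinges on showing $\tau+\lambda=2n$, i.e.\ that the $\eta\otimes\eta$-coefficient in \eqref{3.31} vanishes. The genuine gap is that the proposal never establishes this, and neither of the two routes you sketch can. Feeding $Y=\xi$ back into \eqref{3.30} with $\tau$ constant only reproduces $\lambda+\mu=2n$ and says nothing about $\tau+\lambda$. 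Differentiating \eqref{3.31} along $\xi$ is provably uninformative: writing ${\rm Ric}=\alpha\,g+\beta\,\eta\otimes\eta$ with $\alpha=-(\tau+\lambda)$ and $\beta=\tau+\lambda-2n$, the relations \eqref{2.3} and \eqref{3.1} give $(\nabla_\xi Q)X=\xi(\alpha)X+\xi(\beta)\eta(X)\xi=-2QX-4nX$, whence $\xi(\alpha)=2\beta$ and $\xi(\beta)=-2\beta$; these are automatically compatible with $\alpha+\beta=-2n$ and are satisfied by \emph{every} $\eta$-Einstein Kenmotsu metric, so the structural ODE you mention cannot rule out $\beta\neq 0$. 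Indeed, any $\eta$-Einstein Kenmotsu metric with $\alpha+\beta=-2n$ satisfies \eqref{1.1} with $V=\tau\xi$ for any constant $\tau\neq 0$ by taking the smooth functions $\lambda=-\alpha-\tau$ and $\mu=\tau-\beta$, so no amount of manipulation of \eqref{3.30}--\eqref{3.31} alone can force $\beta=0$; some additional input (an analogue of Lemma~\ref{lem3.2} valid for non-constant $\lambda,\mu$, or a constancy assumption on $\mu$) is required.

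For comparison, the paper disposes of this step in one line: from $\xi(\tau)=2n-\lambda-\mu$ and the constancy of $\tau$ it asserts $\lambda+\tau=2n$ and then reads off ${\rm Ric}=-2n\,g$ from \eqref{3.31}. What actually follows from $\xi(\tau)=0$ is $\lambda+\mu=2n$ --- your relation --- and the passage from there to $\lambda+\tau=2n$ amounts to the unproved claim $\mu=\tau$. So the step you flagged as ``the delicate point on which the proof stands or falls'' is precisely where the published argument is thinnest, and your candid admission that you cannot close it is not a defect of your diagnosis but an accurate reading of the difficulty; the proposal is nonetheless incomplete as a proof of the stated theorem.
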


Now, we construct explicit example of a five-dimensional Kenmotsu manifold that admits an $\eta$-Ricci soliton and a gradient $\eta$-Ricci soliton.

\begin{example}\rm
Let $M^5=\{(x,y,z,u,v)\in\mathbb{R}^5\}$ be a $5$-dimensional manifold, where $(x,y,z,u,v)$ are Cartesian coordinates in $\mathbb{R}^5$.
Let $e_1=v\frac{\partial}{\partial x},\, e_2=v\frac{\partial}{\partial y}$, $e_3=v\frac{\partial}{\partial z}$, $e_4=v\frac{\partial}{\partial u}$, $e_5=-v\frac{\partial}{\partial v}$.
Clearly, $(e_i)$ form an orthonormal basis of vector fields on $M^5$. Define the structure $(\varphi,\xi,\eta,g)$ as follows:
\[
 \varphi(e_1)=e_2,\quad \varphi(e_2)=-e_1,\quad \varphi(e_3)=e_4,\quad \varphi(e_4)=-e_3,\quad \varphi(e_5)=0,\quad \xi=e_5,\quad \eta=dv,
\]
and $g_{ij}=\delta_{ij}$ -- the Kronecker symbol.
By Koszul's formula, we compute the non-trivial components of the Levi-Civita connection $\nabla$ in the following form:
\begin{equation}\label{4.20}
\nabla_{e_i}\,{e_i}=-e_5,\quad \nabla_{e_i}\,{e_5}=e_i\quad (i=1,2,3,4).
\end{equation}
Using this, we can verify that $M^5(\varphi,\xi,\eta,g)$ is a Kenmotsu manifold
and compute the following non-zero components of its curvature tensor:
\[
\begin{tabular}{|c|c|c|c|c|}
 \hline
 $R(e_1,e_2)e_1=e_2$ & $R(e_1,e_2)e_2=-e_1$ & $R(e_1,e_3)e_1=e_3$ & $R(e_1,e_3)e_3=-e_1$\\
 \hline
 $R(e_1,e_4)e_1=e_4$ & $R(e_1,e_4)e_4=-e_1$ & $R(e_1,e_5)e_1=e_5$ & $R(e_1,e_5)e_2=e_5$\\
 \hline
 $R(e_1,e_5)e_5=e_1$ & $R(e_2,e_3)e_2=e_3$ & $R(e_2,e_3)e_3=-e_2$ & $R(e_2,e_4)e_2=e_4$\\
 \hline
 $R(e_2,e_4)e_2=-e_3$ & $R(e_2,e_5)e_5=-e_2$ & $R(e_3,e_4)e_3=e_4$ & $R(e_3,e_4)e_4=-e_3$\\
 \hline
 $R(e_3,e_5)e_3=e_5$ & $R(e_3,e_5)e_5=-e_3$ & $R(e_4,e_5)e_4=e_5$ & $R(e_4,e_5)e_4=-e_5$\\
 \hline
\end{tabular}
\]
Thus, the nonzero components of the Ricci tensor are as follows: ${\rm Ric}(e_i,e_i)=-4$ for $i=1,\ldots,5$; therefore, the Ricci tensor is given by
\begin{equation}\label{4.22}
 {\rm Ric}=-4\,g .
\end{equation}
Hence, $(M,g)$ is an Einstein manifold with constant scalar curvature $r=-20=-2n(2n+1)$ for $n=2$.
If we consider the potential vector field $V=\sigma\xi$ for some constant $\sigma$, then, using \eqref{4.20}, we~get
\begin{equation}\label{4.21}
 {\cal L}_V\,g =2\,\sigma\,g -2\,\sigma\,\eta\otimes\eta.
\end{equation}
Hence, using \eqref{4.22} in \eqref{4.21} and remembering the soliton equation \eqref{1.1}, we can say that the metric $g$ is an $\eta$-Ricci soliton with the potential vector field $V=\sigma\xi$ and the constants $\lambda=4-\sigma$ and $\mu=\sigma$.
Again, $\lambda+\mu=4=2n$, where $n=2$. This illustrates Lemma~\ref{lem3.3}.

In general, for the potential vector field
$V=2x\frac{\partial}{\partial x}+2y\frac{\partial}{\partial y}+2z\frac{\partial}{\partial z}
+2u\frac{\partial}{\partial u}+v\frac{\partial}{\partial v}$ on $M^5$, using \eqref{4.20}, we find
  $({\cal L}_V\,g)(e_i,e_j)=\Big\{\begin{array}{cc}
    2, & \text{if \ $i=j=1,2,3,4$},\\
    0, & \text{elsewhere};
  \end{array}$
therefore, we achieve
\begin{equation}\label{3.52}
 {\cal L}_V\,g = 2\,g-2\,\eta\otimes\eta.
\end{equation}
So, combining \eqref{4.22} and \eqref{3.52}, one can see that soliton equation \eqref{1.1} is satisfied by $\lambda=3$ and $\mu=1$,
i.e., the metric $g$ is an $\eta$-Ricci soliton with the above potential vector field $V$ and the constants $\lambda=3$ and $\mu=1$,
which also satisfy $\lambda + \mu=2n$, for $n=2$. Choosing the function $f(x,y,z)=x^2+y^2+z^2+u^2+\frac12\,v^2$ on $M^5$,
from \eqref{4.22} and \eqref{3.52} we conclude that the metric $g$ is a gradient $\eta$-Ricci soliton with the potential function $f$.
\end{example}

\section{Conclusion}

In this article, we use methods of local Riemannian geometry to study
solutions of
\eqref{1.2}
and characterize Einstein metrics in such broader classes of metrics as almost $\eta$-Ricci solitons and $\eta$-Ricci solitons on Kenmotsu manifolds, which compose a special class of almost contact manifolds.
Our results generalize some results of other authors and
are important not only for differential geometry, but also for theoretical physics.
Following \cite{duggal2017almost}, we can think about physical applications of (almost) $\eta$-Ricci solitons.
%
We delegate for further study the following questions:

\noindent\ \
1: Under what conditions is an $\eta$-Ricci soliton with the potential vector field on a Kenmotsu manifold trivial?

\noindent\ \
2: Are Theorems 1--3 true without assuming the $\eta$-Einstein condition or that the potential vector field $V$ is an infinitesimal contact transformation or $V$ is collinear to the Reeb vector field?

\noindent\ \
3: Is the Kenmotsu metric, admitting a gradient almost $\eta$-Ricci soliton with a potential vector field, trivial?

\noindent\ \
4: Which of the results of this paper are also true for generalized quasi-Einstein Kenmotsu manifolds?

%





\baselineskip=13.pt

\end{document}